\newtheorem{thm}{Theorem}[section]
\newtheorem{lem}{Lemma}[section]
\newtheorem{prop}{Proposition}[section]
\theoremstyle{definition}
\newtheorem*{ack}{Acknowledgment}
\theoremstyle{remark}
\newtheorem{rem}[thm]{Remark}
\numberwithin{equation}{section}
\def\R{\mathbb R}
\def\N{\mathbb N}
\def\S{\mathbb S}
\def\f{\frac}
\def\td{\tilde}
\def\ra{\rightarrow}
\def\pt{\partial}
\begin{document}
\title[Lower volume growth estimates]{Lower volume growth estimates for Self-shrinkers of mean curvature flow}
\author{Haizhong Li  and  Yong Wei}
\address{Department of Mathematical Sciences, Tsinghua University, Beijing, 100084, China.}
\email{hli@math.tsinghua.edu.cn}
\email{wei-y09@mails.tsinghua.edu.cn}

\thanks{The first author was supported by NSFC No. 10971110 and Tsinghua University-K. U. Leuven Bilateral Scientific Cooperation Fund.}
\maketitle

\begin{abstract}
We obtain a Calabi-Yau type volume growth estimates for complete noncompact self-shrinkers of the mean curvature flow, more precisely, every complete noncompact properly immersed self-shrinker has at least linear volume growth.
\end{abstract}

\section{Introduction}

On a complete noncompact Riemannian manifold $M^n$ with nonnegative Ricci curvature, there are two well known theorems on volume growth estimates of geodesic balls. One is the classic Bishop volume comparison theorem (see \cite{L},\cite{SY}) which says the geodesic balls have at most Euclidean growth, i.e., there exists some positive constant $C$ such that
\begin{eqnarray}
\textrm{Vol}(B_{x_0}(r))&\leq&Cr^n
\end{eqnarray}
holds for $r>0$ sufficiently large. The other is a theorem proved by Calabi \cite{Cal} and Yau \cite{Yau} independently, which says the geodesic balls of such manifolds have at least linear volume growth, that is
\begin{eqnarray}
\textrm{Vol}(B_{x_0}(r))&\geq&C r
\end{eqnarray}
holds for some positive constant $C$.

In this paper, we consider the volume growth estimates on self-shrinkers. Note that there are many similarities between self-shrinkers and gradient shrinking solitons. Self-shrinkers give homothetically self-shrinking solutions to mean curvature flow, and describe possible blow ups at a given singularity of the mean curvature flow. While gradient shrinking Ricci solitons also correspond to the self-similar solutions to Hamilton's Ricci flow, and often arise as Type I singularity models.

Before we state our main theorem, we would like to give a roughly brief review about the already known results on volume growth of gradient shrinking Ricci solitons and self-shrinkers.

For an n-dimensional complete noncompact gradient shrinking Ricci soliton $(M,g,f)$ satisfying
\begin{eqnarray}\label{1-1}
R_{ij}+f_{ij}&=&\f 12g_{ij}
\end{eqnarray}
H. -D. Cao and D. Zhou \cite{CZ} proved that it has at most Euclidean volume growth (see also \cite{ChZ}, \cite{Z}). On the lower volume growth estimate, H.-D. Cao and X.P. Zhu \cite{Cao} proved that any complete noncompact gradient shrinking Ricci soliton must have infinite volume. In fact, they showed that there is some positive constant $C$ such that $\textrm{Vol}(B_{x_0}(r))\geq C\ln\ln r$ for $r$ sufficiently large. If the Ricci curvature is bounded, Carillo-Ni \cite{CN} showed that the volume grows at least linearly.  If the average scalar curvature satisfies
\begin{eqnarray*}
\f 1{\textrm{Vol}(B(r))}\int_{B(r)}Rdv&\leq &\delta
\end{eqnarray*}
for $\delta<n/2$ and $r$ sufficiently large, then Cao-Zhou \cite{CZ} showed that there exists some positive constant $C$ such that $\textrm{Vol}(B_{x_0}(r))\geq Cr^{n-2\delta}$.  In \cite{MW} O. Munteanu and J. Wang proved the sharp result that every complete noncompact gradient shrinking Ricci soliton has at least linear volume growth, which answered the question asked by Cao-Zhou (\cite{CZ}, \cite{Cao}) and Lei Ni that if a Calabi-Yau type lower volume growth estimate holds complete noncompact gradient shrinking Ricci solitons.

\medskip\noindent
{\bf Theorem A} (Munteanu-Wang \cite{MW}) {\it
Let $(M,g,f)$ be a complete noncompact gradient shrinking Ricci soliton, then for any $x_0\in M$ there exists a constant $C>0$ such that
\begin{eqnarray*}
Vol(B_{x_0}(r)) &\geq& Cr,\qquad \textrm{for all } r>0,
\end{eqnarray*}
where $B_{x_0}(r)$ is the geodesic ball of $M$ of radius $r$ centered at $x_0\in M$.}

\vskip 3mm

For a complete noncompact self-shrinker $X:M^n\ra \R^{n+m}$ satisfying
\begin{eqnarray}\label{1-3}
H&=&-\f 12X^N
\end{eqnarray}
Lu Wang\cite{W} proved that every entire graphical self-shrinker has polynomial volume growth. Then Q. Ding and Y. L. Xin \cite{DX} generalized it and showed that if the immersion is proper, then the self-shrinker has at most Euclidean volume growth. After that, Cheng and Zhou \cite{ChZ} improved Ding-Xin's result and gave a sharp volume growth estimate, they showed that $\textrm{Vol}(B_{x_0}(r))\leq Cr^{n-2\beta}$,
with $\beta\leq \inf |H|^2$, where the ball $B_{x_0}(r)$ is defined by
\begin{equation}\label{I1-5}
    B_{x_0}(r)=\{x\in M: \rho_{x_0}(x)<r\},\quad x_0\in M
\end{equation}
with $\rho_{x_0}(x)=|X(x)-X(x_0)|$ is the extrinsic distance function.

In this paper, we consider the lower volume growth estimates for complete noncompact self-shrinkers, an analogue Munteanu-Wang's result will be proved.

\begin{thm}
Let $X:M^n\ra \R^{n+m}$ be a complete noncompact properly immersed self-shrinker, then for any $x_0\in M$ there exists a constant $C>0$
\begin{eqnarray}
\textrm{Vol}(B_{x_0}(r))&\geq&C r, ,\qquad \textrm{for all } r>0
\end{eqnarray}
where the ball $B_{x_0}(r)$ is defined as \eqref{I1-5}.
\end{thm}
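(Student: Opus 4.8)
The plan is to reduce the theorem to a volume estimate for sublevel sets of $|X|^2$ and then to squeeze that estimate out of the elliptic identity that $|X|^2$ satisfies on a self-shrinker.

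First I would set up the weighted structure. Since the immersion is proper, $|X|$ is a proper function, so the sublevel sets $D(t)=\{x\in M:\ |X(x)|^2<4t\}$ are compact and exhaust $M$; moreover $|X(x)-X(x_0)|$ and $|X(x)|$ differ by at most $|X(x_0)|$, so linear growth of $\textrm{Vol}(B_{x_0}(r))$ in $r$ is equivalent to linear growth of $\textrm{Vol}(D(t))$ in $\sqrt t$, and the regime of small $r$ only serves to fix the constant $C$. Writing $f=|X|^2/4$ and $\mathcal{L}=\Delta-\tfrac12\langle X,\nabla\cdot\rangle=\Delta-\langle\nabla f,\nabla\cdot\rangle$ for the drift Laplacian, which is self-adjoint with respect to $d\mu=e^{-f}dV$, the self-shrinker equation $\vec H=-\tfrac12X^{N}$ together with $\Delta X=\vec H$ yields the basic identity $\mathcal{L}|X|^2=2n-|X|^2$, i.e. $\mathcal{L}f=\tfrac n2-f$. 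Since $|\nabla f|^2=\tfrac14|X^{T}|^2\le\tfrac14|X|^2=f$ (here $X^T$ is the tangential part of $X$), this gives both the pointwise bound $\Delta f=\mathcal{L}f+|\nabla f|^2\le\tfrac n2$ and the refinement $\tfrac n2-\Delta f=\tfrac14|X^{N}|^2=|\vec H|^2$.

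Next I would run the coarea and divergence machinery on $D(t)$. For a regular value $t$ set $\Sigma(t)=\{f=t\}$, $V(t)=\textrm{Vol}(D(t))$, $A(t)=\textrm{Area}(\Sigma(t))$ and $h(t)=\int_{\Sigma(t)}|\nabla f|\,dA$. The divergence theorem and the identities above give \[ h(t)=\int_{D(t)}\Delta f\,dV=\tfrac n2V(t)-\int_{D(t)}|\vec H|^2dV\le\tfrac n2V(t), \] while the weighted divergence theorem gives \[ h(t)=e^{t}\int_{D(t)}\Big(\tfrac n2-f\Big)e^{-f}dV=e^{t}\int_{\{f\ge t\}}\Big(f-\tfrac n2\Big)e^{-f}dV, \] the last equality using $\int_M(\tfrac n2-f)e^{-f}dV=0$ (integrate $\mathcal{L}f$ over $M$; the weight has finite mass by the Euclidean volume growth of Ding–Xin). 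Since $V'(t)=\int_{\Sigma(t)}|\nabla f|^{-1}dA$ by the coarea formula, Cauchy–Schwarz on $\Sigma(t)$ yields $A(t)^2\le h(t)\,V'(t)\le\tfrac n2V(t)V'(t)$.

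The target is then the differential inequality $V(t)V'(t)\ge c>0$: integrating it gives $V(t)^2\ge 2ct+\textrm{const}$, hence $V(t)\ge c'\sqrt t$ and $\textrm{Vol}(B_{x_0}(r))\ge Cr$. By the last inequality it suffices to prove $\liminf_{t\to\infty}A(t)>0$, that is, that the extrinsic spheres $\{|X|=r\}$ do not degenerate in area; equivalently, comparing the two formulas for $h(t)$ and using $f-\tfrac n2\ge t-\tfrac n2$ on $\{f\ge t\}$, it suffices to bound the weighted tail from below, since $h(t)\ge(t-\tfrac n2)e^{t}\int_{\{f>t\}}e^{-f}dV$ together with $h(t)\le\tfrac n2V(t)$ would then force the linear bound. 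The hard part is precisely this scale-breaking lower estimate: every identity above is invariant under the substitution $V(t)\sim t^{a}$ for any $0<a\le n/2$, so the linear bound is \emph{not} a formal consequence of $\mathcal{L}|X|^2=2n-|X|^2$ and must use properness and the global structure of the end, exactly as in Munteanu–Wang's argument for shrinking Ricci solitons. I expect to close it by contradiction: assuming sublinear growth along some $t_j\to\infty$, I would play the finiteness of $\int_Me^{-f}dV$ and the exhaustion of $M$ by the compact sets $D(t)$ against the upper bound $\int_{\{f>t\}}e^{-f}dV\le \tfrac{n}{2(t-n/2)}V(t)e^{-t}$ obtained above, to contradict a properness-based lower bound on the tail. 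The estimate is sharp: for the round cylinder $\R\times S^{n-1}$ the areas $A(t)$ are constant and the growth is exactly linear.
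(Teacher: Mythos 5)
Your setup is sound and overlaps with the paper's Section~2: the identity $\mathcal{L}f=\tfrac n2-f$ for $f=|X|^2/4$ is exactly the content of \eqref{2-1}--\eqref{2-2}, your bound $h(t)\le\tfrac n2V(t)$ is the paper's \eqref{eq2-5}, and the reduction to linear growth of origin-centered extrinsic balls is the same. But the proof has a genuine gap, and you have located it yourself: after correctly observing that all of these identities are scale-invariant and cannot by themselves rule out $V(t)\sim t^{a}$ for small $a$, you do not supply the extra, non-scale-invariant input. The contradiction scheme you sketch cannot close as stated. The inequality $\int_{\{f>t\}}e^{-f}dV\le \tfrac{n}{2(t-n/2)}V(t)e^{-t}$ is an \emph{upper} bound on the weighted tail, and to contradict it you would need a \emph{lower} bound on that tail; but any such lower bound amounts to a lower bound on the volume of the annuli $\{t<f<t+1\}$, which is precisely what is being proved --- properness alone gives no quantitative control here. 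Even the weaker statement that $M$ has infinite volume, which your scheme implicitly needs, is not a consequence of the identities you list.

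The missing ingredient in the paper is Ecker's logarithmic Sobolev inequality \eqref{LSI}, which is used twice and is what breaks the scale invariance. First (Lemma 3.2), a Cao--Zhu-type argument with cutoffs on dyadic annuli, combined with a local non-collapsing estimate $V_{x_0}(r)\ge\kappa r^n$ for small extrinsic balls (Lemma 3.1, from $\Delta\rho_{x_0}^2\ge 2n-2|H|\rho_{x_0}$), shows via \eqref{2-8} that the volume is infinite. Second, in the main argument the LSI in the form \eqref{LSI2}, applied to a Lipschitz cutoff supported on $B(t+2)\setminus B(t-1)$, yields
\begin{equation*}
\bigl(V(t+1)-V(t)\bigr)\ln\bigl(V(t+2)-V(t-1)\bigr)^{-1}\le C_0\bigl(V(t+2)-V(t-1)\bigr),
\end{equation*}
which, together with the almost-monotonicity of $r^{-n}V(r)$ from Lemma 2.2 (giving $V(t+2)-V(t-1)\le C_2V(t)/t$), shows that if $V(r)\le\epsilon r$ once for small $\epsilon$, then by induction $V(k)\le 2\epsilon r$ for all integers $k\ge r$, contradicting infinite volume. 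If you want to complete your approach, you must import the LSI (or an equivalent entropy-type inequality) at exactly the point where you wrote ``I expect to close it by contradiction''; without it, the argument does not exclude, say, logarithmic volume growth.
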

\begin{rem}
Note that this is sharp because the volume of the cylinder self-shrinker $X: \S^{n-1}(\sqrt{2(n-1)})\times \R\ra \R^{n+1}$ grows linearly.
\end{rem}
\begin{ack}
 We would like to thank Prof. Huai-Dong Cao for his help and comments, thank Robert Haslhofer for informing us the paper \cite{MW} of Ovidiu Munteanu and Jiaping Wang. Special thanks go to Ovidiu Munteanu for particularly valuable comments and pointing out an error in \eqref{LSI} in the draft of this paper. We also express our thanks to Prof. Detang Zhou for his interests in our work.

\end{ack}

\section{Preliminary}

For a complete immersed self-shrinker $X:M^n\ra \R^{n+m}$ satisfies \eqref{1-3}, we have
\begin{eqnarray}
  |H|^2+\f 14\Delta|X|^2&=&\f n2\label{2-1}\\
  \nabla |X|^2 &=& 2X^T\label{2-2}
\end{eqnarray}

Note that for a gradient shrinking Ricci soliton which satisfies \eqref{1-1}, we take the trace in \eqref{1-1} and get
\begin{eqnarray}\label{2-3}
R+\Delta f&=& \f n2
\end{eqnarray}

The main idea of this paper is comparing the two equations \eqref{2-1} and \eqref{2-3}, in fact, we can correspond $|H|^2$ to $R$, and $\f 14|X|^2$ to $f$, then exploring the similarities between self-shrinker and gradient shrinking Ricci soliton.

Denote $\rho(x)=|X|$, we have
\begin{equation}\label{eq2-4}
    \nabla\rho=\f{X^T}{|X|} \quad \textrm{and}\quad |\nabla\rho|=\f{|X^T|}{|X|}\leq 1,\qquad \textrm{for }\rho\geq 1
\end{equation}
Denote
\begin{eqnarray}
  && B(r)=\left\{x\in M: \rho(x)<r\right\}\\
  && V(r)=\textrm{Vol}(B(r)) =\int_{B(r)}dv,\quad \eta(r)=\int_{B(r)}|H|^2dv
\end{eqnarray}
Then by the co-area formula (cf. \cite{SY}), we have
\begin{eqnarray}
V(r)&=&\int_0^rds\int_{\pt B(s)}\f 1{|\nabla\rho|}d\sigma\\
V'(r)&=&\int_{\pt B(r)}\f 1{|\nabla\rho|}d\sigma=r\int_{\pt B(r)}\f 1{|X^T|}d\sigma\\
\eta(r)&=&\int_0^rds\int_{\pt B(s)}\f{|H|^2}{|\nabla \rho|}d\sigma=\int_0^r sds\int_{\pt B(s)}\f{|H|^2}{|X^T|}d\sigma\\
\eta'(r)&=&r\int_{\pt B(r)}\f{|H|^2}{|X^T|}d\sigma\label{eq2-8}
\end{eqnarray}

Now we state the following Lemma:
\begin{lem}
Let $X:M^n\ra \R^{n+m}$ be a complete noncompact properly immersed self-shrinker, then
\begin{eqnarray}\label{eq3-1}
nV(r)-rV'(r)&=&2\eta(r)-\f 4r\eta'(r)
\end{eqnarray}
\end{lem}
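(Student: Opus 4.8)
The plan is to integrate the self-shrinker identity \eqref{2-1} over the ball $B(r)$ and to evaluate the resulting Laplacian term by the divergence theorem, converting it into boundary integrals that can be recognized as $V'(r)$ and $\eta'(r)$. Since the immersion is proper, $\overline{B(r)}$ is compact, so integrating \eqref{2-1} over $B(r)$ gives
\[
\eta(r)+\f14\int_{B(r)}\Delta|X|^2\,dv=\f n2\,V(r),
\]
and the whole problem reduces to computing $\int_{B(r)}\Delta|X|^2\,dv$.

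First I would apply the divergence theorem to write $\int_{B(r)}\Delta|X|^2\,dv=\int_{\pt B(r)}\langle\nabla|X|^2,\nu\rangle\,d\sigma$, where $\nu=\nabla\rho/|\nabla\rho|$ is the outward unit normal to $\pt B(r)$. Using \eqref{2-2}, that is $\nabla|X|^2=2X^T$, together with $\nabla\rho=X^T/|X|$ and $|\nabla\rho|=|X^T|/|X|$ from \eqref{eq2-4}, a short computation gives $\langle\nabla|X|^2,\nu\rangle=2|X^T|$ along $\pt B(r)$. The decisive step is then to rewrite $2|X^T|$ using the orthogonal decomposition $|X|^2=|X^T|^2+|X^N|^2$ together with the self-shrinker equation \eqref{1-3}, which yields $|X^N|^2=4|H|^2$ and hence $|X^T|^2=|X|^2-4|H|^2=r^2-4|H|^2$ on $\pt B(r)$, where $\rho=|X|=r$. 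Consequently
\[
2|X^T|=\f{2|X^T|^2}{|X^T|}=\f{2r^2}{|X^T|}-\f{8|H|^2}{|X^T|}.
\]

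Integrating this over $\pt B(r)$ and recognizing the two boundary integrals as $\int_{\pt B(r)}|X^T|^{-1}\,d\sigma=V'(r)/r$ and $\int_{\pt B(r)}|H|^2|X^T|^{-1}\,d\sigma=\eta'(r)/r$, which follow from the displayed co-area formula for $V'(r)$ and from \eqref{eq2-8}, I obtain $\int_{B(r)}\Delta|X|^2\,dv=2rV'(r)-\f8r\eta'(r)$. Substituting this back into the integrated identity and multiplying by $2$ produces $nV(r)-rV'(r)=2\eta(r)-\f4r\eta'(r)$, which is exactly \eqref{eq3-1}.

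The main obstacle is the justification rather than the algebra: I must ensure the divergence theorem applies and that $V$ and $\eta$ are differentiable where I use $V'$ and $\eta'$. Properness gives compactness of $\overline{B(r)}$, but the identity as written presumes that $\pt B(r)$ is a smooth hypersurface on which $\nabla\rho\neq 0$; this holds for regular values of $\rho$, hence for almost every $r$ by Sard's theorem, and there $V$ and $\eta$ are differentiable and the computation above is valid. One then extends to all $r>0$ using the monotonicity and continuity of $V$ and $\eta$. A minor technical point is that $\rho=|X|$ is only Lipschitz (not smooth) where $X=0$, but $|X|^2$ is smooth everywhere, so both the integrand $\Delta|X|^2$ and the boundary computation are unaffected for $r>0$.
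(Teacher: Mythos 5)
Your proof is correct and follows essentially the same route as the paper: integrate \eqref{2-1} over $B(r)$, apply the divergence theorem, rewrite $2|X^T|$ on $\pt B(r)$ as $2(|X|^2-4|H|^2)/|X^T|$ via the self-shrinker equation, and identify the boundary integrals with $V'(r)/r$ and $\eta'(r)/r$ through the co-area formula. Your additional remarks on regular values of $\rho$ and the smoothness of $|X|^2$ are sensible justifications that the paper leaves implicit.
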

\begin{proof}
Integrate \eqref{2-1} over $B(r)$, we have by using \eqref{1-3}, \eqref{2-2}, \eqref{eq2-4} -\eqref{eq2-8}
\begin{eqnarray*}
nV(r)-2\int_{B(r)}|H|^2&=&\f 12\int_{B(r)}\Delta|X|^2dv\\
&=&\f 12\int_{\pt B(r)}\nabla |X|^2\cdot\nu d\sigma\\
&=&\f 12\int_{\pt B(r)}\nabla |X|^2\cdot \f{\nabla\rho}{|\nabla \rho|}d\sigma\\
&=&\int_{\pt B(r)}|X^T|d\sigma \\
&=&\int_{\pt B(r)}\f{|X|^2-4|H|^2}{|X^T|}d\sigma\\
&=&rV'(r)-4\int_{\pt B(r)}\f{|H|^2}{|X^T|}d\sigma
\end{eqnarray*}
\end{proof}

\begin{rem}
From the fourth equality in the above proof, we can get
\begin{eqnarray}\label{eq2-5}
\f 1{V(r)}\int_{B(r)}|H|^2&\leq&\f n2
\end{eqnarray}
that is, the average of $|H|^2$ is bounded by $n/2$.
\end{rem}

\begin{lem}
Let $X:M^n\ra \R^{n+m}$ be a complete noncompact properly immersed self-shrinker, then
\begin{equation}
\f {V(r_1)}{r_1^n}-\f {V(r_2)}{r_2^n} \leq  2n\f {V(r_1)}{r_1^{n+2}},\quad \textrm{for }r_1>r_2\geq r_0=\sqrt{2(n+2)}
\end{equation}
\end{lem}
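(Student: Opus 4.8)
The plan is to recognize the identity \eqref{eq3-1} as an exact derivative and integrate it. Since
\begin{equation*}
\f{d}{dr}\left(\f{V(r)}{r^n}\right)=\f{rV'(r)-nV(r)}{r^{n+1}},
\end{equation*}
dividing \eqref{eq3-1} by $-r^{n+1}$ rewrites it as
\begin{equation*}
\f{d}{dr}\left(\f{V(r)}{r^n}\right)=-\f{2\eta(r)}{r^{n+1}}+\f{4\eta'(r)}{r^{n+2}}.
\end{equation*}
Integrating from $r_2$ to $r_1$ then expresses the difference $\f{V(r_1)}{r_1^n}-\f{V(r_2)}{r_2^n}$ as a sum of two $\eta$-integrals, so the entire problem reduces to estimating those two terms.

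Next I would integrate the $\eta'$-term by parts. Differentiating the factor $4r^{-(n+2)}$ in $\int_{r_2}^{r_1}\f{4\eta'(r)}{r^{n+2}}\,dr$ produces the boundary contribution $\f{4\eta(r_1)}{r_1^{n+2}}-\f{4\eta(r_2)}{r_2^{n+2}}$ together with a remainder $4(n+2)\int_{r_2}^{r_1}\f{\eta(r)}{r^{n+3}}\,dr$. Collecting everything gives the exact identity
\begin{equation*}
\f{V(r_1)}{r_1^n}-\f{V(r_2)}{r_2^n}=\f{4\eta(r_1)}{r_1^{n+2}}-\f{4\eta(r_2)}{r_2^{n+2}}+\int_{r_2}^{r_1}\eta(r)\left(-\f{2}{r^{n+1}}+\f{4(n+2)}{r^{n+3}}\right)dr.
\end{equation*}

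The two potentially positive pieces are the boundary term at $r_1$ and the remaining integral. For the boundary term I would invoke \eqref{eq2-5}, which gives $\eta(r_1)\le\f n2 V(r_1)$ and hence $\f{4\eta(r_1)}{r_1^{n+2}}\le 2n\,\f{V(r_1)}{r_1^{n+2}}$, exactly the right-hand side of the claimed inequality. The term $-\f{4\eta(r_2)}{r_2^{n+2}}$ is $\le 0$ since $\eta\ge 0$. It then remains to show that the last integral is nonpositive: factoring out $r^{-(n+1)}$, its integrand carries the sign of $-2+\f{4(n+2)}{r^2}$, which is $\le 0$ precisely when $r^2\ge 2(n+2)$, i.e. $r\ge r_0$. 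Because the integration runs over $[r_2,r_1]\subset[r_0,\infty)$ and $\eta\ge 0$, the integral is $\le 0$, and combining the three estimates yields the lemma.

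The step I would watch most carefully is the bookkeeping in the integration by parts. The $r_1$-boundary term must be retained, since it is exactly where the $2n\,V(r_1)/r_1^{n+2}$ on the right originates via \eqref{eq2-5}; meanwhile the surviving integral must have the correct sign, and it is this requirement that forces the threshold $r_0=\sqrt{2(n+2)}$ and thereby explains the hypothesis $r_2\ge r_0$. Everything else is furnished by the co-area identities \eqref{eq2-4}--\eqref{eq2-8} and the positivity and monotonicity of $\eta$ already recorded in the preliminaries.
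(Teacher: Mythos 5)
Your proof is correct and follows essentially the same route as the paper: rewrite Lemma 2.1 as $(r^{-n}V(r))'=4r^{-n-2}\eta'(r)-2r^{-n-1}\eta(r)$, integrate, integrate the $\eta'$-term by parts, and control the $r_1$-boundary term via \eqref{eq2-5}. The only (harmless) difference is that you discard the residual integral directly by observing its integrand is nonpositive for $r\geq r_0$, whereas the paper bounds it using the monotonicity of $\eta$ and lets it partially cancel the $-4\eta(r_2)r_2^{-n-2}$ boundary term; both yield the same final estimate.
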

\begin{proof}
Lemma 2.1 implies that
\begin{eqnarray*}
(r^{-n}V(r))'&=&r^{-n-1}(rV'(r)-n V(r))\\
&=&4r^{-n-2}\eta'(r)-2r^{-n-1}\eta(r)
\end{eqnarray*}
Integrating the above equation from $r_2$ to $r_1$, we get
\begin{eqnarray*}
r_1^{-n}V(r_1)-r_2^{-n}V(r_2)&=&\int_{r_2}^{r_1}4s^{-n-2}\eta'(s)ds-\int_{r_2}^{r_1}2s^{-n-1}\eta(s)ds\\
&=&4r_1^{-n-2}\eta(r_1)-4r_2^{-n-2}\eta(r_2)\\
&&+2\int_{r_2}^{r_1}(2(n+2)-s^2)s^{-n-3}\eta(s)ds
\end{eqnarray*}

Choose $r_0=\sqrt{2(n+2)}$, and let $r_1>r_2\geq r_0$.  Since $\eta(r)$ is nonnegative and nondecreasing in $r$, we have
\begin{eqnarray*}
\int_{r_2}^{r_1}(2(n+2)-s^2)s^{-n-3}\eta(s)ds&\leq&\eta(r_2)\int_{r_2}^{r_1}(2(n+2)-s^2)s^{-n-3}ds\\
&\leq&\eta(r_2)\left(-2r_1^{-n-2}+2r_2^{-n-2}\right).
\end{eqnarray*}
Thus
\begin{eqnarray*}
r_1^{-n}V(r_1)-r_2^{-n}V(r_2)&\leq&4r_1^{-n-2}(\eta(r_1)-\eta(r_2))\\
&\leq& 4r_1^{-n-2}\eta(r_1)\\
&\leq& 2nr_1^{-n-2}V(r_1)
\end{eqnarray*}
where we used \eqref{eq2-5} in the last inequality.  This completes the proof of Lemma 2.2.
\end{proof}

\begin{rem}
Let $r_2=r_0$ and $r=r_1$ sufficiently large in Lemma 2.2, we can obtain that
 \begin{eqnarray*}
 V(r)&\leq & 2r_0^{-n}V(r_0)r^n
 \end{eqnarray*}
 Since $B_{x_0}(r)\subset B(r+|X_0|)$, we have
 \begin{eqnarray*}
Vol(B_{x_0}(r))&\leq & V(r+|X_0|)\leq C(r+|X_0|)^n\leq 2^nCr^n
 \end{eqnarray*}
for $r\geq |X_0|$.  This recovers Ding-Xin's result \cite{DX}, which states that every complete noncompact properly immersed self-shrinker has at most Euclidean volume growth.
\end{rem}

In the last of this section, we recall the Logarithmic Sobolev inequality for submanifolds in Euclidean space, this was shown by K. Ecker in \cite{Ec}.

\begin{prop}[LSI] Let $X:M^n\ra \R^{n+m}$ be an n-dimensional submanifold with measure $dv$, then the following inequality
\begin{eqnarray}\label{LSI}
&&\int_Mf^2(\ln f^2)e^{-\f {|X|^2}4}dv -\int_Mf^2\ln\left(\int_Mf^2e^{-\f {|X|^2}4} \right)e^{-\f {|X|^2}4}dv \nonumber \\
&\leq&2\int_M|\nabla f|^2e^{-\f {|X|^2}4}dv +\f 12\int_M|H+\f 12X^N|^2f^2e^{-\f {|X|^2}4}dv \\
&&\qquad+C(n)\int_Mf^2e^{-\f {|X|^2}4}\nonumber
\end{eqnarray}
holds for any nonnegative function $f$ for which all integrals are well-defined and finite, where $C(n)$ is a positive constant depending on $n$.
\end{prop}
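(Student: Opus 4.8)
Since \eqref{LSI} is quoted from Ecker \cite{Ec}, the proof on record is the reference itself; here is the route I would take to reprove it. The first observation I would make concerns the geometric meaning of the defect term: $H+\f12X^N$ is precisely the \emph{weighted mean curvature} of $M$ relative to the Gaussian weight $e^{-|X|^2/4}$. Indeed, differentiating the weighted area $\int_Me^{-|X|^2/4}dv$ along a normal variation $V$ gives $-\int_M\langle H+\f12 X^N,\,V\rangle e^{-|X|^2/4}dv$, so this quantity vanishes exactly on self-shrinkers. Thus \eqref{LSI} should be read as the logarithmic Sobolev counterpart of the Michael--Simon Sobolev inequality for the Gaussian-weighted submanifold, with the ordinary mean curvature of Michael--Simon replaced by the weighted mean curvature $H+\f12X^N$, and with a constant $C(n)$ that must depend only on the intrinsic dimension $n$.

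Guided by this, the plan is to work with the drift Laplacian $\mathcal{L}=\Delta-\f12\langle X^T,\nabla\cdot\rangle$, which is self-adjoint with respect to $e^{-|X|^2/4}dv$. One might hope to invoke the Bakry--Émery criterion directly, but the relevant weighted Ricci tensor computes to $\f12 g$ minus a second-fundamental-form term, hence it is bounded above by $\f12 g$ yet not bounded below, so no curvature-dimension lower bound is available and the naive semigroup argument fails; this is exactly why the extrinsic geometry must instead enter through the first variation. Concretely I would take the $n$-dimensional backward heat kernel $\rho=(4\pi)^{-n/2}e^{-|X|^2/4}$ and use Huisken's monotonicity formula: testing $\rho$ against $f^2$ and applying the divergence (first variation) theorem for submanifolds produces the intrinsic gradient term $\int_M|\nabla f|^2\rho\,dv$, while the weight contributes the $\f12X^N$ part and the extrinsic curvature the $H$ part, so the combined error is precisely $\int_M|H+\f12X^N|^2f^2\rho\,dv$, the monotonicity defect. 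The passage from this differential estimate to the entropy inequality is then classical: apply $\ln s\le s-1$ (equivalently Jensen's inequality) to turn $L^2$ control into control of $\int f^2\ln f^2$, normalizing by $Z=\int_Mf^2e^{-|X|^2/4}dv$ so the second left-hand term of \eqref{LSI} appears as $Z\ln Z$. Keeping the kernel $n$-dimensional throughout is what forces the final constant to be $C(n)$.

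The step I expect to be the main obstacle is pinning down the constants — both the coefficient in front of $\int_M|\nabla f|^2e^{-|X|^2/4}dv$ and the additive $C(n)$. Any submanifold statement must be consistent with the flat model $M=\R^n$, where $H+\f12X^N\equiv0$ and \eqref{LSI} reduces to a (possibly defective) logarithmic Sobolev inequality for the weight $e^{-|X|^2/4}$; here the exponential test functions $f=e^{t\langle a,X\rangle}$ make the sharp gradient coefficient for this weight completely explicit and thereby fix what is achievable, so matching the coefficient literally as written is delicate (this is the point flagged in the Acknowledgment). The remaining burden is analytic rather than conceptual: since $M$ is complete and noncompact one must insert a cutoff and justify every integration by parts and every use of the divergence theorem, controlling the cutoff and boundary contributions via the properness of the immersion and the at-most-Euclidean volume growth recorded in Lemma 2.2.
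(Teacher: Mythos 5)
The paper does not prove this proposition: it is imported verbatim from Ecker \cite{Ec}, so there is no in-paper argument to compare your attempt against. Your proposal correctly recognizes this, and the outline you append (backward heat kernel, first-variation identity producing the $H+\f12X^N$ defect, then a Jensen-type step to pass to the entropy, with the constant $C(n)$ coming from the $n$-dimensional normalization of the kernel) is consistent with Ecker's actual strategy, though as written it remains a roadmap --- in particular the central entropy-production computation and the cutoff justifications are only gestured at --- rather than a proof.
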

On self-shrinker which satisfies \eqref{1-3}, the Logarithmic Sobolev inequalities \eqref{LSI} implies the following two inequalities:
\begin{enumerate}
\item[(1)] For any nonnegative function $f$ which satisfies the  normalization
\begin{eqnarray*}\int_Mf^2e^{-\f {|X|^2}4}dv&=&1
\end{eqnarray*}
the following inequality
\begin{eqnarray}\label{2-8}
\int_Mf^2(\ln f)e^{-\f {|X|^2}4}dv&\leq&\int_M|\nabla f|^2e^{-\f {|X|^2}4}dv +\f 12C(n)
\end{eqnarray}
holds.

\item[(2)] By substituting $f=ue^{\f {|X|^2}8}$ into \eqref{LSI}, we have the following inequality
\begin{equation}\label{LSI2}
    \int_Mu^2\ln u^2-\left(\int_Mu^2\right)\left(\ln\int_Mu^2\right)\leq 4\int_M|\nabla u|^2+C(n)\int_Mu^2
\end{equation}
holds for any nonnegative function $u$ for which all the integrals are well-defined and finite.
\end{enumerate}

\section{Proof of Theorem 1.1}

In order to prove Theorem 1.1, we need the following Lemma which holds for any complete properly immersed submanifold in Euclidean space.

\begin{lem}
Let $X:M^n\ra \R^{n+m}$ be a complete properly immersed submanifold.  For any $x_0\in M$, $r\leq 1$, if $|H|\leq \f Cr$ in $ B_{x_0}(r)$ for some positive constant $C>0$, where the ball $B_{x_0}(r)$ is defined as \eqref{I1-5}, then the following inequality holds
\begin{eqnarray}
V_{x_0}(r)=\textrm{Vol}(B_{x_0}(r)) &\geq& \kappa r^n
\end{eqnarray}
here $\kappa=\omega_ne^{-C}$.
\end{lem}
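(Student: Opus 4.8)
The plan is to prove this local lower bound through the monotonicity formula for the extrinsic density ratio, rather than through the logarithmic Sobolev inequality. Write $Y=X-X(x_0)$ for the position vector based at $X(x_0)$, so that $\rho_{x_0}(x)=|Y(x)|$ is the extrinsic distance of \eqref{I1-5} and $B_{x_0}(t)=\{\rho_{x_0}<t\}$. Decompose $Y=Y^T+Y^N$ into its tangential and normal parts along $M$, and set $\Theta(t)=V_{x_0}(t)/(\omega_n t^n)$ for the density ratio, where $V_{x_0}(t)=\textrm{Vol}(B_{x_0}(t))$.

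First I would establish the first--variation identity on $M$. Since $\textrm{div}_M Y=n$ and, for the normal part, $\textrm{div}_M Y=\textrm{div}_M Y^T-\langle Y^N,H\rangle$ where $H=\Delta_M X$ is the mean curvature vector, the divergence theorem on $B_{x_0}(t)$ gives, for a.e.\ $t$,
\begin{equation*}
nV_{x_0}(t)=\int_{\pt B_{x_0}(t)}|Y^T|\,d\sigma-\int_{B_{x_0}(t)}\langle Y^N,H\rangle\,dv,
\end{equation*}
where properness (so that $B_{x_0}(t)$ is precompact with finite volume) and Sard's theorem (so that $\pt B_{x_0}(t)$ is a smooth hypersurface for a.e.\ $t$) make every term meaningful; the outward conormal on $\pt B_{x_0}(t)$ is $Y^T/|Y^T|$, which is why only $|Y^T|$ appears.

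Next I would convert the boundary integral into a derivative of $V_{x_0}$. Because $\rho_{x_0}$ is the \emph{extrinsic} distance, $|\nabla\rho_{x_0}|=|Y^T|/|Y|\le 1$, and on $\pt B_{x_0}(t)$ one has $|Y^T|=t^2|Y^T|^{-1}-|Y^N|^2|Y^T|^{-1}$. The co-area formula gives $V_{x_0}'(t)=t\int_{\pt B_{x_0}(t)}|Y^T|^{-1}\,d\sigma$, exactly as in \eqref{eq2-8}, so combining these yields the monotonicity identity
\begin{equation*}
\f{d}{dt}\Theta(t)=\f{1}{\omega_n t^{n+1}}\left[\int_{\pt B_{x_0}(t)}\f{|Y^N|^2}{|Y^T|}\,d\sigma+\int_{B_{x_0}(t)}\langle Y^N,H\rangle\,dv\right].
\end{equation*}
The first term is nonnegative, while in the second I would estimate $|\langle Y^N,H\rangle|\le |Y^N|\,|H|\le|Y|\,|H|\le t\cdot(C/r)$ on $B_{x_0}(t)\subset B_{x_0}(r)$ for $t\le r$. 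This gives $\Theta'(t)\ge -\f{C}{r}\Theta(t)$, so that $e^{Ct/r}\Theta(t)$ is nondecreasing on $(0,r]$.

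Finally, letting $t\to0$ I would use that for a smooth immersion the density ratio satisfies $\liminf_{t\to0}\Theta(t)\ge1$ (the local sheet through $x_0$ contributes $\omega_n t^n(1+o(1))$ and any further preimages only add), so monotonicity of $e^{Ct/r}\Theta(t)$ evaluated at $t=r$ gives $1\le e^{C}\Theta(r)$, that is $V_{x_0}(r)\ge\omega_n e^{-C}r^n=\kappa r^n$. I expect the main obstacle to be the rigorous justification of the first--variation identity and the small-scale density limit for a general, possibly non-embedded, proper immersion, together with the use of Sard's theorem to work on the level sets of the extrinsic distance. The bound $|Y^N|\le|Y|$ combined with the scale-matched hypothesis $|H|\le C/r$ is precisely what keeps the mean-curvature error integrable down to $t=0$ and produces the clean constant $e^{-C}$; notably, the hypothesis $r\le1$ is not used in this argument.
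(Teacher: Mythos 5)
Your proof is correct and is essentially the paper's own argument in different packaging: the identity $n V_{x_0}(t)=\int_{\pt B_{x_0}(t)}|Y^T|-\int_{B_{x_0}(t)}\langle Y^N,H\rangle$ is just the integrated form of $\Delta\rho_{x_0}^2=2n+2\langle X-X_0,H\rangle$ used in the paper, and the monotonicity of $e^{Ct/r}\Theta(t)$ is exactly the exponentiated version of the paper's differential inequality $V_{x_0}'/V_{x_0}\geq n/s-C/r$, concluded with the same small-scale density limit. (Note the paper's proof of this lemma does not use the logarithmic Sobolev inequality either, so the contrast you draw in your opening sentence is with a route the paper never takes.)
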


\begin{proof}
In $B_{x_0}(r)$ we have
\begin{eqnarray*}
\Delta \rho_{x_0}^2(x) &=& 2n+2<X-X_0,H>\\
&\geq &2n-2|H|\rho_{x_0}(x)
\end{eqnarray*}
If $|H|\leq \f Cr$ in $B_{x_0}(r)$, then in $B_{x_0}(r)$ we have
\begin{eqnarray}
2n-2\f Cr\rho_{x_0}(x)&\leq&\Delta \rho_{x_0}^2(x)
\end{eqnarray}
Integrating the above equation over $B_{x_0}(s)$ for $s\leq r$
\begin{eqnarray*}
(2n-2\f Crs)V_{x_0}(s)&\leq& \int_{B_{x_0}(s)}\Delta\rho_{x_0}^2(x)\\
&=&\int_{\pt B_{x_0}(s)}\nabla \rho_{x_0}^2(x)\cdot \nu\\
&=&\int_{\pt B_{x_0}(s)}\nabla \rho_{x_0}^2(x)\cdot \f {\nabla \rho_{x_0}(x)}{|\nabla \rho_{x_0}(x)|}\\
&=& \int_{\pt B_{x_0}(s)} 2\f {|(X-X_0)^T|^2}{|(X-X_0)^T|}\\
&\leq & 2s\int_{\pt B_{x_0}(s)} \f {|X-X_0|}{|(X-X_0)^T|}\\
&=& 2s \int_{\pt B_{x_0}(s)} \f 1{|\nabla \rho_{x_0}|}\\
&=& 2sV_{x_0}'(s),
\end{eqnarray*}
where the last equality is due to the co-area formula. This implies
\begin{eqnarray}
\f {V_{x_0}'(s)}{V_{x_0}(s)} &\geq& \f ns-\f Cr
\end{eqnarray}
Integrating from $\epsilon>0$ to $r$, we have
\begin{eqnarray*}
V_{x_0}(r) &\geq& \f {V_{x_0}(\epsilon)}{\epsilon^n}r^ne^{-\f Cr(r-\epsilon)}
\end{eqnarray*}
Let $\epsilon\ra 0$, by $\lim\limits_{\epsilon\ra 0^+}\f {V_{x_0}(\epsilon)}{\epsilon^n}=\omega_n$, we have
\begin{eqnarray}
V_{x_0}(r) &\geq& \kappa r^n,\qquad (\kappa=\omega_ne^{-C})
\end{eqnarray}
\end{proof}

\begin{rem}
As pointed out to us by Ovidiu Munteanu, Lemma 3.1 also follows from Michael-Simon Sobolev inequality, see page 377 in \cite{MS}.
\end{rem}

\vskip 3mm

Next we will prove that every complete noncompact properly immersed self-shrinker has infinite volume, the argument in the following proof is an adoption of Cao-Zhu's \cite{Cao} proof on that complete noncompact shrinking Ricci solitons have infinite volume.
\begin{lem}
Every complete noncompact properly immersed self-shrinker $X: M^n\ra \R^{n+m}$ has infinite volume
\end{lem}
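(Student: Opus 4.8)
The plan is to prove that $V(r)$ grows at least like a power of $r$ (in fact, faster than any large power fails, so volume is infinite) by deriving a differential inequality for $V(r)$ and integrating it. The key tool will be the logarithmic Sobolev inequality \eqref{LSI2}, following Cao--Zhu's strategy for Ricci solitons. First I would suppose, for contradiction, that the self-shrinker has finite volume, i.e. $\textrm{Vol}(M)=V(\infty)<\infty$. The idea is to apply the inequality \eqref{LSI2} to a carefully chosen test function $u$, typically a cutoff of the form $u=\phi(\rho)$ supported on an annulus or ball $B(r)$, with $u$ close to a constant on most of the region, so that the gradient term $\int_M|\nabla u|^2$ is controlled while the entropy term $\int_M u^2\ln u^2-(\int_M u^2)\ln(\int_M u^2)$ detects the volume.

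The main estimates I would carry out are these. Taking $u=\chi_{B(r)}$ (approximated by a smooth cutoff equal to $1$ on $B(r)$ and dropping to $0$ across a thin shell where $|\nabla u|$ is bounded using $|\nabla\rho|\le 1$ from \eqref{eq2-4}), the left-hand side of \eqref{LSI2} becomes essentially $-V(r)\ln V(r)$ (since $u^2\ln u^2=0$ where $u=1$, and the term $(\int u^2)\ln(\int u^2)=V(r)\ln V(r)$), while the right-hand side is bounded by $4\int_M|\nabla u|^2+C(n)V(r)$. The gradient term, concentrated on the shell $B(r+1)\setminus B(r)$, is controlled by the volume of that shell, which under the finite-volume assumption tends to $0$ as $r\to\infty$. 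Passing to the limit $r\to\infty$ with $V(r)\to V(\infty)$, one obtains $-V(\infty)\ln V(\infty)\le C(n)V(\infty)$, which forces $\ln V(\infty)\ge -C(n)$, a bounded-below but finite quantity; the contradiction must instead come from combining this with the lower bound $V(r)\to\infty$ that the differential inequality of Lemma 2.2 rules out in the finite case. More precisely, I would exploit that $r^{-n}V(r)$ is controlled: if $V(\infty)<\infty$ then $r^{-n}V(r)\to 0$, and feeding this decay back into the normalized log-Sobolev inequality yields an inconsistency.

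Alternatively, and perhaps more cleanly, I would use the normalized inequality \eqref{2-8}. Choosing $f=c\,\chi_{B(r)}\,e^{|X|^2/8}$ rescaled to satisfy the normalization $\int_M f^2 e^{-|X|^2/4}dv=1$, the normalizing constant is $c=(\textrm{Vol}(B(r)))^{-1/2}$ when written in the unweighted measure, and the entropy $\int_M f^2(\ln f)e^{-|X|^2/4}dv$ becomes $-\tfrac12\ln V(r)$ up to lower-order terms, while $\int_M|\nabla f|^2e^{-|X|^2/4}dv$ splits into a term from $\nabla\chi$ (supported on the shell) and a term from $\nabla e^{|X|^2/8}=\tfrac14 X^T e^{|X|^2/8}$, the latter producing $\tfrac{1}{16}\int_{B(r)}|X^T|^2/V(r)$. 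Using $|X^T|^2=|X|^2-4|H|^2\le\rho^2$ and the finite-volume hypothesis to show $V(r)\to V(\infty)$, I would bound this against $\tfrac12 C(n)$ and derive $\ln V(r)\ge -C'$ uniformly, then extract a contradiction with the requirement that the entropy be coercive as the support shrinks in measure.

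The hard part will be the bookkeeping in the limit $r\to\infty$: ensuring that the gradient/shell contributions genuinely vanish and that the entropy term is estimated in the correct direction to contradict finiteness. The delicate point is that a naive application of \eqref{LSI2} or \eqref{2-8} only yields a \emph{lower} bound on $V(\infty)$ rather than a contradiction, so the real work is combining the log-Sobolev inequality with the monotonicity/Euclidean upper bound from Lemma 2.2 (which gives $r^{-n}V(r)\to 0$ under finite volume) to produce genuinely conflicting asymptotics. I expect the proof to proceed by assuming finite volume, plugging the constant cutoff into \eqref{LSI2}, letting the shell shrink, and arriving at an impossible inequality; the linear lower bound of Theorem 1.1 itself will then follow in a subsequent argument refining this infinite-volume conclusion.
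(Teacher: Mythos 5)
There is a genuine gap: your test functions are supported on the \emph{balls} $B(r)$, whose measure tends to $V(\infty)$ under the finite-volume hypothesis, so the normalization constant in \eqref{2-8} (equivalently the term $-(\int u^2)\ln(\int u^2)$ in \eqref{LSI2}) stays bounded and, as you yourself observe, you only obtain a harmless lower bound $\ln V(\infty)\geq -C(n)$. Neither of your proposed rescues closes this. Feeding the decay $r^{-n}V(r)\to 0$ from Lemma 2.2 back into the log-Sobolev inequality produces no inconsistency (an upper bound on $V$ is perfectly compatible with finite volume), and your remark about ``the entropy being coercive as the support shrinks in measure'' does not apply to your chosen supports, which do not shrink. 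In the alternative normalized computation, the term $\f 1{8V(r)}\int_{B(r)}|X|^2$ in the entropy is only half-cancelled by $\f1{16V(r)}\int_{B(r)}|X^T|^2$, and under finite volume the leftover average $\f 1{V(r)}\int_{B(r)}|X|^2$ can remain bounded (the volume may concentrate near the origin), so again no contradiction results.

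The paper's proof supplies exactly the three ingredients you are missing. First, the test function is localized on a \emph{distant annulus} $A(k_1,k_2)=\{2^{k_1}\leq\rho\leq 2^{k_2}\}$ whose volume is at most $\epsilon$ by the finite-volume assumption; the normalization \eqref{4-5} then forces $e^{2L}\geq 1/\epsilon$, i.e.\ $L\geq\f12\ln(1/\epsilon)\to\infty$, and $L$ is the term that blows up against the fixed constant $C(n)$. Second, to keep the error term $e^{2L}V(k_1,k_2)$ bounded (by $2^{4n}$), one must select the annulus so that $V(k_1,k_2)\leq 2^{4n}V(k_1+2,k_2-2)$; this ratio-selection step \eqref{4-4} is an iteration that terminates only because of a \emph{lower} bound on annulus volumes. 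Third, that lower bound, $V(k,k+1)\geq\kappa\,2^{2k-1}2^{-kn}$, is obtained by packing $2^{2k-1}$ disjoint extrinsic balls of radius $2^{-k}$ into $A(k,k+1)$ (using properness and noncompactness) and applying Lemma 3.1 with the shrinker bound $|H|=\f12|X^N|\leq 2^k$ there. Your proposal never invokes Lemma 3.1 or any volume lower bound on annuli, and without it the whole contradiction mechanism cannot be assembled.
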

\begin{proof}
We are going to show that if $M$ has finite volume, then we shall obtain a contradiction to the Logarithmic Sobolev inequality \eqref{2-8}.  We denote the annulus region
\begin{eqnarray*}
A(k_1,k_2)=\left\{x\in M: 2^{k_1}\leq \rho(x)\leq 2^{k_2}\right\},&&
V(k_1,k_2)=\textrm{Vol}(A(k_1,k_2)),
\end{eqnarray*}
here $\rho(x)=|X|$. Since $X:M^n\ra \R^{n+m}$ is complete noncompact properly immersed, $X(M)$ cannot be contained in a compact Euclidean ball $\bar{B}(R)$ with radius $R<+\infty$. Then for $k$ large enough, $A(k,k+1)$ contains at least $2^{2k-1}$ disjoint balls
\begin{equation*}
   B_{x_i}(r)=\{x\in M, \rho_{x_i}(x)<r\},\quad x_i\in M, r=2^{-k}
\end{equation*}
where $\rho_{x_i}(x)=|X(x)-X(x_i)|$ is the extrinsic distance function. Noting that on self-shrinker
\begin{eqnarray}
|H|=\f 12|X^N|&\leq& \f 12|X|\leq 2^k =\f 1r, \qquad \textrm{  in } A(k,k+1)
\end{eqnarray}
thus by Lemma 3.1, each ball $B_{x_i}(r)$ has at least volume $\kappa2^{-kn}$, here $\kappa=\omega_ne^{-1}$. So we have
\begin{eqnarray}\label{4-2}
V(k,k+1)&\geq &\kappa 2^{2k-1}2^{-kn}
\end{eqnarray}

Suppose that $\textrm{Vol}(M)<+\infty$, then for every $\epsilon>0$, there exists a large constants $k_0>0$ such that if $k_2>k_1>k_0$, we have
\begin{eqnarray}\label{4-3}
V(k_1,k_2) &\leq & \epsilon
\end{eqnarray}
and we can also choose $k_1, k_2$ in such a way that
\begin{eqnarray}\label{4-4}
V(k_1,k_2) &\leq & 2^{4n}V(k_1+2,k_2-2)
\end{eqnarray}

In deed, we may first choose $K>0$ sufficiently large, and let $k_1\approx K/2$, $k_2\approx 3K/2$, suppose \eqref{4-4} does not hold, i.e.,
\begin{eqnarray*}
V(k_1,k_2) &\geq & 2^{4n}V(k_1+2,k_2-2)
\end{eqnarray*}
If
\begin{eqnarray*}
V(k_1+2,k_2-2) &\leq & 2^{4n}V(k_1+4,k_2-4)
\end{eqnarray*}
then we are done, otherwise we can repeat this process, after $j$ steps we get
\begin{eqnarray*}
V(k_1,k_2) &\geq & 2^{4nj}V(k_1+2j,k_2-2j)
\end{eqnarray*}
When $j\approx K/4$, \eqref{4-2} implies that
\begin{equation*}
    \textrm{Vol}(M)\geq V(k_1,k_2)\geq 2^{nK}V(K,K+1)\geq \kappa 2^{2K-1}
\end{equation*}
But we have already assumed $\textrm{Vol}(M)$ is finite, so after finitely many steps \eqref{4-4} must hold for some $k_2>k_1$. Thus for any $\epsilon>0$ we can choose $k_1$ and $k_2\approx 3k_1$ such that both \eqref{4-3} and \eqref{4-4} are valid.

Now we are going to derive a contradiction to the Logarithmic Sobolev inequality \eqref{2-8}.  We define a smooth cut-off function $\psi(t)$ by
\begin{equation*}
    \psi(t)=\left\{\begin{array}{ll}
                     1, & 2^{k_1+2}\leq t\leq 2^{k_2-2} \\
                     0, & \textrm{outside }[2^{k_1},2^{k_2}]
                   \end{array}\right.\qquad 0\leq \psi(t)\leq 1,\qquad |\psi'(t)|\leq 1
\end{equation*}
Then let
\begin{eqnarray*}
f(x)&=& e^{L+\f{|X|^2}8}\psi(\rho(x))
\end{eqnarray*}
we can choose $L$ such that
\begin{equation}\label{4-5}
    1=\int_Mf^2e^{-\f {|X|^2}4}=e^{2L}\int_{A(k_1,k_2)}\psi^2(\rho(x))
\end{equation}
By the Logarithmic Sobolev inequality \eqref{2-8} we have
\begin{eqnarray*}
\f 12C(n)&\geq& \int_{A(k_1,k_2)}e^{2L}\psi^2(L+\f{|X|^2}8+\ln \psi)\\
&&\quad-\int_{A(k_1,k_2)}e^{2L}\left|\psi'\nabla\rho+\psi \f{X^T}4\right|^2\\
&\geq& \int_{A(k_1,k_2)}e^{2L}\psi^2(L+\f{|X|^2}8+\ln \psi)\\
&&\quad-2\int_{A(k_1,k_2)}e^{2L}|\psi'|^2-\f 18\int_{A(k_1,k_2)}e^{2L}\psi^2 |X|^2\\
&=&L+\int_{A(k_1,k_2)}e^{2L}\psi^2\ln \psi-2\int_{A(k_1,k_2)}e^{2L}|\psi'|^2\\
&\geq& L-(\f 1{2e}+2)e^{2L}V(k_1,k_2),
\end{eqnarray*}
where we have used $|\nabla\rho(x)|\leq 1$ and the elementary inequality $t\ln t\geq -\f 1e$ for $0\leq t\leq 1$.
Then \eqref{4-4} implies,
\begin{eqnarray}
\f 12C(n)&\geq &L-(\f 1{2e}+2)e^{2L}2^{4n}V(k_1+2,k_2-2)\nonumber\\
&\geq&L-(\f 1{2e}+2)2^{4n}e^{2L}\int_{A(k_1,k_2)}\psi^2(\rho(x))\nonumber\\
&=&L-(\f 1{2e}+2)2^{4n}\label{cn3-9}
\end{eqnarray}
where the last equality is due to \eqref{4-5}. On the other hand, by \eqref{4-3} \eqref{4-5} and $0\leq\psi\leq 1$, we have
\begin{eqnarray}
1&\leq&e^{2L}\epsilon.
\end{eqnarray}
So we can make $L$ arbitrary large by letting $\epsilon>0$ sufficiently small, this contradicts with \eqref{cn3-9} because $C(n)$ is just a universal positive constant depending on $n$. Therefore $M$ must have infinite volume.
\end{proof}

\begin{rem}
In the paper \cite{ChZ}, Xu Cheng and Detang Zhou proved that if the self-shrinker is not properly immersed, then it must also have infinite volume.
\end{rem}

\vskip 2mm

Now we are ready to prove Theorem 1.1.
\begin{proof}[Proof of Theorem 1.1]
We use the similar arguments of Munteanu-Wang's in their proof of Theorem A. First we can choose $c>0$ such that $V(r)>0$ for $r\geq c$. To prove Theorem 1.1, it suffices to show there exists a constant $C>0$ depending only on $n$ such that
\begin{equation}\label{pf1}
    V(r)\geq Cr
\end{equation}
hold for all $r\geq c$. Indeed, if \eqref{pf1} holds, then for $\forall x_0\in M$, since for $r$ sufficiently large,
\begin{equation*}
    B_{x_0}(r)\supset B(r-|X_0|),
\end{equation*}
this implies
\begin{equation}
    V_{x_0}(r)\geq V(r-|X_0|)\geq C(r-|X_0|)\geq \f C2 r
\end{equation}
for $r\geq 2|X_0|$.

\vskip 3mm

Now we are going to prove \eqref{pf1} by contradiction. Assume that for any $\epsilon>0$, there exits $r\geq c$ such that
\begin{eqnarray}\label{pf7}
V(r)&\leq& \epsilon r
\end{eqnarray}
Without loss of generality, we can assume $r\in \N$ and consider the following set:
\begin{eqnarray}
D&:=& \left\{k\in \N: V(t)\leq 2\epsilon t \textrm{ for all integers } r\leq t\leq k\right\}
\end{eqnarray}
Obviously $D\neq {\O}$ because $r\in D$, we want to prove that any integer $k\geq r$ is in $D$.

\vskip 3mm

 For $t\geq c$, we define a function $u$ by
 \begin{equation*}
    u(x)=\left\{\begin{array}{cl}
                  1 & \textrm{in } B(t+1)\setminus B(t) \\
                  t+2-\rho(x) & \textrm{in } B(t+2)\setminus B(t+1)\\
                  \rho(x)-(t-1) & \textrm{in } B(t)\setminus B(t-1)\\
                  0& \textrm{otherwise}
                \end{array}\right.
 \end{equation*}
 Substituting $u(x)$ into the Logarithmic Sobolev inequaltiy \eqref{LSI2}, we obtain
 \begin{equation}\label{pflog}
-\left(\int_Mu^2\right)\ln\left(V(t+2)-V(t-1)\right)\leq C_0\left(V(t+2)-V(t-1)\right)
 \end{equation}
with $C_0=C(n)+4+\f 1{e}$, here we have used $|\nabla\rho(x)|\leq 1$ and the elementary inequality $t\ln t\geq -\f 1e$ for $0\leq t\leq 1$.

\medskip

From Lemma 2.2, we have
\begin{equation}
\f {V(t+1)}{(t+1)^n}-\f {V(t)}{t^n}\leq 2n \f {V(t+1)}{(t+1)^{n+2}},\qquad \textrm{for } t\geq \sqrt{2(n+2)}
\end{equation}
then
\begin{eqnarray*}
V(t+1)&\leq& V(t)\f {(t+1)^n}{t^n}\left(1-\f {2n}{(t+1)^2}\right)^{-1}
\end{eqnarray*}
This implies for $t$ sufficiently large,
\begin{eqnarray*}
V(t+1)-V(t) &\leq& V(t)\left( \f {(t+1)^n}{t^n}\left(1+\f {2n}{(t+1)^2}+O(\f 1{(t+1)^4})\right)-1\right)\\
&\leq& V(t)\left((1+\f 1t)^n-1+\f C{t^2}(1+\f 1t)^{n-2}\right)\\
&\leq&V(t)\f Ct
\end{eqnarray*}
So there exists some constant $C_1(n)$ such that for all $t\geq C_1(n)$,
\begin{eqnarray}
    V(t+1)-V(t)&\leq& \td{C}_1\f {V(t)}t,\quad \textrm{and}\label{pf2}\\
     V(t+1)&\leq& 2V(t)\label{pf3}
\end{eqnarray}
where $\td{C}_1$ depending only on $n$. Combining \eqref{pf2} and \eqref{pf3} gives that for all $t\geq C_1(n)+1$,
\begin{eqnarray}
V(t+2)-V(t-1)&\leq&\td{C}_1 \left(\f {V(t+1)}{t+1}+\f {V(t)}t+\f {V(t-1)}{t-1}\right)\nonumber\\
&\leq&\td{C}_1\left(\f 2{t+1}+\f 1t+\f 1{t}(1+\f 1{C_1(n)})\right)V(t)\nonumber\\
&\leq& C_2\f {V(t)}{t}\label{pf8},
\end{eqnarray}
where $C_2$ depending only on $n$. Note that we can assume $r\geq C_1(n)+1$ for the $r$ satisfying \eqref{pf7}.  In fact, if for any give $\epsilon>0$, all the $r$ which satisfies \eqref{pf7} is bounded above by $C_1(n)+1$, then $V(r)\geq \epsilon r$ holds for any $r>C_1(n)+1$, this implies $M$ has at least linear volume growth.

Then for all integers $r\leq t\leq k$, we have $t\in D$, \eqref{pf8} implies
\begin{eqnarray}
V(t+2)-V(t-1)&\leq& 2C_2\epsilon
\end{eqnarray}
If we choose $\epsilon$ such that $2C_2\epsilon<1 $, and noting that
\begin{eqnarray}
\int_Mu^2 &\geq& V(t+1)-V(t)
\end{eqnarray}
then \eqref{pflog} implies
\begin{equation}
\left(V(t+1)-V(t)\right)\ln (2C_2\epsilon)^{-1}\leq C_0\left(V(t+2)-V(t-1)\right)
\end{equation}
Iterating from $t=r$ to $t=k$ and summing up give that
\begin{equation}
\left(V(k+1)-V(r)\right)\ln (2C_2\epsilon)^{-1}\leq 3C_0V(k+2)\leq 6C_0V(k+1)
\end{equation}
where we used \eqref{pf3} in the last inequality. Therefore
\begin{eqnarray}
V(k+1) &\leq & V(r)\f {\ln (2C_2\epsilon)^{-1}}{\ln (2C_2\epsilon)^{-1}-6C_0}\nonumber\\
&\leq&\epsilon r\f {\ln (2C_2\epsilon)^{-1}}{\ln (2C_2\epsilon)^{-1}-6C_0}\label{pf4}
\end{eqnarray}
We can choose $\epsilon$ small enough such that
\begin{eqnarray}\label{pf6}
\f {\ln (2C_2\epsilon)^{-1}}{\ln (2C_2\epsilon)^{-1}-6C_0}&\leq & 2
\end{eqnarray}
So \eqref{pf4} implies
\begin{eqnarray}\label{pf5}
V(k+1) &\leq & 2\epsilon r, \quad\textrm{for any } k\in D
\end{eqnarray}
Noting that $r\leq k+1$, therefore \eqref{pf5} implies $k+1\in D$. Then by induction we conclude that $D$ contains all the integers $k\geq r$. However \eqref{pf5} implies
\begin{eqnarray*}
V(k) &\leq & 2\epsilon r,\qquad\textrm{ for any integer } k\geq r
\end{eqnarray*}
This implies that $M$ has finite volume, which contradicts with Lemma 3.2. So there exists no such $r>c$ such that $V(r)\leq \epsilon r$ with $\epsilon>0$ chosen in \eqref{pf6}. That is $V(r)\geq \epsilon r$ for $r>c$, and this completes the proof of Theorem 1.1.
\end{proof}

\vskip 3mm

By assuming some condition on $|H|^2$, we can further prove the following result,
\begin{prop}
Let $X:M^n\ra \R^{n+m}$ be a complete properly immersed self-shrinker. Suppose the average norm square of the mean curvature satisfies the upper bound
\begin{equation}\label{1-5}
    \f 1{\textrm{Vol}(B(r))}\int_{B(r)}|H|^2\leq \delta
\end{equation}
for some $\delta<\f n2$ and  $r$ sufficiently large . Then for any $x_0\in M$, there exists some positive constant $C$ such that
\begin{eqnarray}
\textrm{Vol}(B_{x_0}(r))&\geq&  Cr^{n-2\delta}
\end{eqnarray}
\end{prop}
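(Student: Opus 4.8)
The plan is to substitute the averaged hypothesis \eqref{1-5} directly into the integral identity of Lemma 2.1 and then integrate an elementary differential inequality, exactly mirroring how the Cao-Zhou estimate $\textrm{Vol}(B_{x_0}(r))\geq Cr^{n-2\delta}$ is obtained for gradient shrinking Ricci solitons. The whole point is that the term $\f 4r\eta'(r)$ appearing in \eqref{eq3-1} carries a \emph{favorable} sign, so it may simply be discarded once we notice that $\eta$ is nondecreasing.

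Concretely, I would first rewrite \eqref{eq3-1} as
\begin{eqnarray*}
rV'(r)&=&(n-2\delta)V(r)+2\bigl(\delta V(r)-\eta(r)\bigr)+\f 4r\eta'(r).
\end{eqnarray*}
Let $R_0$ be the threshold beyond which \eqref{1-5} holds, so that $\eta(r)\leq\delta V(r)$ for $r\geq R_0$; then the middle term is nonnegative. Since $\eta(r)=\int_{B(r)}|H|^2dv$ is nonnegative and nondecreasing, formula \eqref{eq2-8} gives $\eta'(r)\geq 0$, so the last term is nonnegative as well. Hence for all $r\geq R_0$,
\begin{eqnarray*}
rV'(r)&\geq&(n-2\delta)V(r),
\end{eqnarray*}
equivalently $\bigl(\ln V(r)\bigr)'\geq (n-2\delta)/r$. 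Integrating from $R_0$ to $r$ yields
\begin{eqnarray*}
V(r)&\geq&\f{V(R_0)}{R_0^{\,n-2\delta}}\,r^{n-2\delta},\qquad r\geq R_0,
\end{eqnarray*}
which is the desired bound for the balls $B(r)=\{\rho<r\}$ centered at the origin.

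Finally, to pass to an arbitrary center $x_0$, I would reuse the ball-comparison step from the proof of Theorem 1.1: since $B_{x_0}(r)\supset B(r-|X_0|)$ for $r>|X_0|$, the estimate above gives $\textrm{Vol}(B_{x_0}(r))\geq V(r-|X_0|)\geq C(r-|X_0|)^{n-2\delta}\geq C'r^{n-2\delta}$ for $r\geq 2|X_0|$, completing the argument. I do not expect a genuine obstacle here: unlike Lemma 3.2 and Theorem 1.1, this proposition needs neither the logarithmic Sobolev inequality nor any iteration, because the averaged bound \eqref{1-5} is precisely strong enough to close the loop inside Lemma 2.1. The only points requiring a little care are that \eqref{1-5} is assumed only for large $r$ (so one integrates from $R_0$ rather than from $0$), and that $V'$ and $\eta'$ are understood through the coarea formulas preceding \eqref{eq2-8}, which hold for almost every $r$ and thereby legitimize the integration of the logarithmic derivative.
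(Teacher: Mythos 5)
Your proposal is correct and follows essentially the same route as the paper: both combine the hypothesis \eqref{1-5} with Lemma 2.1 to obtain $rV'(r)\geq (n-2\delta)V(r)$, integrate the logarithmic derivative, and then pass to an arbitrary center via $B_{x_0}(r)\supset B(r-|X_0|)$. Your version is in fact slightly more careful than the paper's, which integrates from $1$ even though \eqref{1-5} is only assumed for $r$ large; starting the integration at the threshold $R_0$, as you do, is the cleaner choice.
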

\begin{proof}
Combining the assumption \eqref{1-5} with Lemma 2.1 gives that
\begin{equation}
    (n-2\delta)V(r)\leq rV'(r)
\end{equation}
then
\begin{eqnarray*}
    \f {V'(r)}{V(r)}&\geq& \f {n-2\delta}{r}
\end{eqnarray*}
Integrating from $1$ to $r$ gives
\begin{equation*}
    V(r)\geq V(1)r^{n-2\delta}
\end{equation*}
Since $\textrm{Vol}(B_{x_0}(r))\geq V(r-|X_0|)$ for $r>|X_0|$, we have
 \begin{equation}
\textrm{Vol}(B_{x_0}(r))\geq V(1)(r-|X_0|)^{n-2\delta}\geq (\f 12)^{n-2\delta}V(1)r^{n-2\delta}
\end{equation}
for $r>2|X_0|$.
\end{proof}

\end{document}